\newtheorem{lemma}{Lemma}
\newtheorem{remark}{Remark}
\newtheorem{theorem}{Theorem}
\newtheorem{assumption}{Assumption}
\newcommand{\modif}{\textcolor{black}}
\def\BibTeX{{\rm B\kern-.05em{\sc i\kern-.025em b}\kern-.08em
    T\kern-.1667em\lower.7ex\hbox{E}\kern-.125emX}}
\begin{document}

\title{Stabilization of Integral Delay Equations by solving Fredholm equations
}

\author{Jean Auriol$^{1}$, 
\thanks{$^{1}$Jean Auriol is with Universit\'e Paris-Saclay, CNRS, CentraleSup\'elec, Laboratoire des signaux et syst\`emes, 91190, Gif-sur-Yvette, France. {\tt\small jean.auriol@l2s.centralesupelec.fr}.}%
}
\maketitle
\thispagestyle{empty}

\begin{abstract}
In this paper, we design a stabilizing state-feedback control law for a system represented by a general class of integral delay equations subject to a pointwise and distributed input delay.
The proposed controller is defined in terms of integrals of the state and input history over a fixed-length time window. We show that the closed-loop stability is guaranteed, provided the controller integral kernels are solutions to a set of Fredholm equations. The existence of solutions is guaranteed under an appropriate spectral controllability assumption, resulting in an implementable stabilizing control law. The proposed methodology appears simpler and more \modif{general} compared to existing results in the literature. In particular, under additional regularity assumptions, the proposed approach can be expanded to address the degenerate case where only a distributed control term is present.
\end{abstract}

\begin{IEEEkeywords}
Integral Delay Equations, distributed input delay, stabilization, Fredholm integral equations,
\end{IEEEkeywords}
\section{Introduction}

\IEEEPARstart{I}{ntegral} Delay Equations (IDEs) are a class of linear difference equations with both pointwise and distributed delays. They naturally appear when modeling engineering or biological systems involving transport, communication, or measurement delays~\cite{niculescu2001delay}. Among examples of interest, we can cite sampled-data systems~\cite{fridman2014introduction}, population dynamics, or biomedical systems as epidemics~\cite{COOKE197687}. 
Interestingly,  linear first-order hyperbolic Partial Differential Equations (PDEs), which are widely used in the modeling of systems of balance laws~\cite{bastin2016stability}, can also be rewritten as IDEs~\cite{auriol2019explicit}. 

In this paper, we consider such a linear IDE subject to a pointwise and distributed input delay. This specific configuration naturally emerges when addressing the stabilization of under-actuated hyperbolic systems~\cite{auriol2020CDCunderactuation}, or networks of hyperbolic PDEs with actuators located at arbitrary nodes of the network~\cite{redaud2022stabilizing,redaud2021stabilizing}. For instance, this class of problems arises when developing traffic control strategies on vast road networks, where the actuator (ramp metering) can be located at a crossroad (junction of two roads)~\cite{espitia2022traffic}.

When designing stabilizing controllers for such systems, one of the difficulties to consider is that they combine pointwise and distributed delays on the state and the actuator. In many instances, such delays can lead to closed-loop system instability or result in suboptimal performance of the proposed output-feedback control laws. Furthermore, with the presence of a distributed delayed term in the control input, the stabilizability of the system is not guaranteed. Hence, necessary and sufficient stabilizability and controllability conditions \cite{halebook, bellman1963differential,mounier1998algebraic} have been derived in the literature using complex analysis.

Recently, a constructive approach has been introduced in~\cite{redaud2022stabilizing, redaud2021stabilizing} to stabilize the aforementioned class of IDEs. The proposed methodology involved employing a PDE system as a comparison system~\cite{niculescu2001delay} with equivalent stability properties. Subsequently, this PDE system was stabilized using the backstepping approach, incorporating a Fredholm transformation~\cite{yoshida1960lectures}. Notably, the invertibility of such a transformation, usually not guaranteed except under specific structural assumptions~\cite{coron2017finite, bribiesca2015backstepping}, was related to a spectral controllability condition. Although such a result was a breakthrough and could be seen as a possible generalization of the backstepping approach for underactuated systems, it has thus far only been developed for a scalar IDE featuring a single delay in the state and in the actuation. We believe it suffers from several limitations that may impede its generalizability to non-scalar systems with multiple delays. 1) First, it requires introducing a comparison system, adding potentially unnecessary complexity to the design process. 2) Then, demonstrating the existence and invertibility of the proposed Fredholm transformation entails intricate computations (such as integration along characteristic lines and rewriting of kernel equations as Fredholm integral equations), which could become exceedingly challenging for non-scalar systems with multiple delays. 3) Finally, the proposed methodology requires a non-zero pointwise delayed actuation term to avoid degenerate configurations.   

In this paper, we overcome the aforementioned limitations of~\cite{redaud2022stabilizing} by introducing a novel methodology for stabilizing IDEs with pointwise and distributed actuation. The proposed approach can be summarized as follows. We begin by considering a potential candidate for the controller, which is expressed in terms of integrals of the state and input history over a fixed-length time window.  We demonstrate that to guarantee closed-loop stability, the corresponding controller integral kernels need to satisfy a set of Fredholm integral equations. The existence of solutions to these equations is directly linked to a spectral controllability condition and is established using an operator framework~\cite{coron2016stabilization}. Compared to~\cite{redaud2022stabilizing}, the proposed approach features several advantages. It is simpler as it does not require introducing a comparison system \modif{to design the controller}, and the stabilization problem boils down to the existence of a solution to simple integral equations with a convolution structure. Then, we believe it easily extends to non-scalar systems with multiple delays. Lastly, under additional regularity assumptions, our approach can be expanded to address the degenerate case identified by~\cite{redaud2022stabilizing}.


 \emph{Notations:} 
 Consider $(a_0,a_1) \in \mathbb{R}^2$ such that $a_0\leq a_1$.  
 For all positive integer $n$, we denote $L^2([a_0,a_1],\mathbb{R}^n)$ the space of $\mathbb{R}^n$-valued functions that are square integrable on $[a_0,a_1]$. 
 We denote $C^{pw}_\tau=C^{pw}([-\tau,0],\mathbb{R}^n)$ the Banach space of piecewise continuous  functions mapping the interval $[-\tau,0]$ into $\mathbb{R}^n$ and denote its associated norm as $
||\phi_{[t]}||_{C^{pw}_\tau} = \sup_{s \in [-\tau,0]} \sqrt{\phi^T(t+s) \phi(t+s)}$.
 For a function $\phi: [-\tau, \infty) \mapsto \mathbb{R}^n$, its partial trajectory $\phi_{[t]}$ is defined by $\phi_{[t]}(\theta)=\phi(t+\theta), -\tau \leq \theta \leq 0$. 
The identity matrix of size $n \in \mathbb{N}$ is denoted $\text{Id}_n$. The index $n$ will be omitted if no confusion arises. We use $s$ for the Laplace variable.

\section{Problem under consideration} \label{sec_pb}

\subsection{Time-delay formulation}

Consider two known positive delays $\tau_0>0$ and $\tau_1>0$ and the initial data $x_0=x^0 \in C^{pw}_{\tau_0}$.  Let us introduce the following \modif{integral delay equation} (IDE)
\begin{align}
   & x(t)=~ax(t-\tau_0)+\int_0^{\tau_0} N(\nu)x(t-\nu)d\nu\nonumber \\
    &+bU(t-\tau_1)+ \int_0^{\tau_1} M(\nu)U(t-\nu)d\nu, \quad t \geq 0,\label{eq_neutral}
\end{align}
where $a \in (-1,1)$, $b\in \mathbb{R}$, and $N, M$ are two piecewise continuously differentiable functions, \modif{respectively defined on the intervals $[0,\tau_0]$ and $[0,\tau_1]$}. 
The function $U(t)$ is the input function and has values in $\mathbb{R}$. Its initial condition belongs to $C^{pw}_{\tau_1}$ and is denoted $U^0$.  
A function~$x:[-\tau_0, \infty) \rightarrow \mathbb{R}$ is called a solution of the initial value problem~\eqref{eq_neutral} if~$x_0=x^0$ and if equation~\eqref{eq_neutral} is satisfied for~$t \geq 0$. Since we have $|a|<1$, the open-loop system only has a finite number of unstable roots \cite{halebook}. More precisely, the \emph{principal part} of the system has to be exponentially stable. If such a condition is not fulfilled, then it is impossible to delay-robustly stabilize the system~\eqref{eq_neutral} \modif{(see \cite[Theorem 1.1]{logemann1996conditions})}. Due to the distributed delay term ($\int_0^{\tau_0}N(\nu) x(t-\nu)d\nu$), the open-loop system may however be unstable. The objective of this paper is to design a control law $U(t)$ that exponentially stabilizes the system in the sense of the $C^{pw}_{\tau_0}$-norm. \modif{Moreover, we want this control law to also be exponentially stable}. More precisely, we want to design a control law such that~$\exists~\nu > 0,~ C_0 > 0,~ \forall x^0 \in C_{\tau_0}^{pw}$, all solutions of the closed-loop \eqref{eq_neutral} satisfy $$\modif{||x_{[t]}||_{C^{pw}_{\tau_0}}+||U_{[t]}||_{C^{pw}_{\tau_1}} \leq C_0\mathrm{e}^{-\nu t}(||x^0||_{C^{pw}_{\tau_0}}+||U^0||_{C^{pw}_{\tau_1}})}.$$
The actuation term in~\eqref{eq_neutral} appears through pointwise and distributed delay terms. It has been seldom studied in the literature~\cite{bekiaris2016stability,ponomarev2015reduction}, and is a major difference compared to existing results. 
When we assume that there is at least a pointwise delay on the actuation (i.e., $b \ne 0$), a stabilizing controller was proposed in~\cite{redaud2021stabilizing,redaud2022stabilizing} using a PDE comparison system that induced intricate computations and the use of Fredholm integral transformations. 
We will show in this paper that such a comparison system is not required in the design. 
Moreover, the approach we propose in this paper extends to the case $b=0$ under additional regularity conditions.

\subsection{Design assumptions}
Besides the assumption $|a|<1$ that is required to avoid having an infinite number of unstable poles, we consider that $\tau_1\geq\tau_0$, i.e., the delay acting on the control input is larger than the state delay. 
Although our approach could straightforwardly be extended to cover the case $\tau_0>\tau_1$, we could also deal with this case by picking $U(t)=\hat U(t+\tau_1-\tau_0)$. 
Let us formally take the Laplace transform of equation~\eqref{eq_neutral} (with zero initial condition). We have $F_0(s) x(s)=F_1(s) U(s)$,
where the holomorphic function $F_0$ and $F_1$ are defined by
\begin{align}
   & F_0(s)=1-a\mathrm{e}^{-\tau_0s}-\int_0^{\tau_0}N(\nu)\mathrm{e}^{-\nu s}d\nu,\label{eq_F_0} \\
    &F_1(s)=b\mathrm{e}^{-\tau_1s}+\int_0^{\tau_1}M(\nu)\mathrm{e}^{-\nu s}d\nu.  \label{eq_F_1}
\end{align}
To guarantee the possibility of stabilizing system~\eqref{eq_neutral}, we make the following (spectral) controllability assumption that guarantees that $F_0$ and $F_1$ cannot simultaneously vanish. 
\begin{assumption} \label{Spectral controllability}\textbf{Spectral controllability}  \cite{mounier1998algebraic,pandolfi1976stabilization}\\
For all $s \in \mathbb{C}$, $\text{rank}[F_0(s), F_1(s)]=1$. 
\end{assumption}
\modif{We emphasize that Assumption~\ref{Spectral controllability} is not a necessary condition to stabilize the system~\eqref{eq_neutral}. It should be possible to use a weaker stabilizability condition, as ‘‘for all $s \in \mathbb{C}$, s.t. $\Re(s)\geq$ 0 $rank[F_0(s), F_1(s)]=1$". However, with this weaker assumption, the analysis would become more intricate as it is not directly possible to show the invertibility of the Fredholm integral operator introduced in Section~\ref{Sec_Result_b_ne_0}.}

  \subsection{A first result of Fredholm integral operator}
In this paper, we consider potential candidates for the controller 
expressed in terms of integrals of the state and input history over a fixed-length time window. 
We will show that the corresponding controller integral kernels have to satisfy a set of Fredholm integral
equations to guarantee closed-loop stability. To facilitate our analysis and establish the existence of solutions for these integral equations, we first present a result guaranteeing the invertibility of a Fredholm integral operator. Consider the Fredholm integral operator $\mathcal{T}: L^2([a_1,a_2], \mathbb{R}^n )\rightarrow  L^2([a_1,a_2], \mathbb{R}^n )$ defined by
\begin{align}
\mathcal{T} \begin{pmatrix} h(\cdot) \end{pmatrix}=M_0h(\cdot) - \int_a^b
K(\cdot,y) h(y)dy, \label{Chap_4_eq:fredholm_operator}
\end{align}
where $a_1<a_2$ are two real, $n>0$ is an integer, $M_0$ is an invertible matrix that belongs to $\mathbb{R}^{n\times n}$,  $K$ is bounded piecewise continuous function defined on the square $\{(\xi,y)\in [a_1,a_2]^2\}$. Note that the integral part of the operator has a regularizing effect, such that $\forall h \in L^2([a_1,a_2],\mathbb{R}^n),~\int_a^b K(x,y)h(y)dy \in H^1([a_1,a_2], \mathbb{R}^n)$. 

\begin{lemma}\label{Chap_4_lemma:operator_invert}
Consider two linear operators $\mathcal{A},\mathcal{B}$, such that $D(\mathcal{A})=D(\mathcal{B}) \subset  L^2([a_1,a_2], \mathbb{R}^n ) $. Consider the Fredholm integral operator $\mathcal{T}:  L^2([a_1,a_2], \mathbb{R}^n )\rightarrow L^2([a_1,a_2], \mathbb{R}^n )$ as defined by equation~\eqref{Chap_4_eq:fredholm_operator}. Assume that 
\begin{enumerate}
    \item $\ker(\mathcal{T}) \subset D(\mathcal{A})$,
    \item $\ker(\mathcal{T}) \subset \ker(\mathcal{B})$,
    \item $\forall h \in \ker(\mathcal{T}),~\mathcal{T}\mathcal{A} h=0 $,
    \item $\forall s \in \mathbb{C}$, $\ker (s\text{Id}-\mathcal{A}) \cap \ker (\mathcal{B})=\{0\}$.
\end{enumerate}
Then, the operator $\mathcal{T}$ is invertible, and \modif{its inverse is a Fredholm integral operator whose kernels inherit the same regularity properties}.
\end{lemma}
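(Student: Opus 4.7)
The plan is to use the Fredholm alternative to reduce invertibility of $\mathcal{T}$ to injectivity, and then rule out a non-trivial kernel by a finite-dimensional eigenvalue argument on an $\mathcal{A}$-invariant subspace.

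First I would observe that $\mathcal{K}h(\xi) := \int_{a_1}^{a_2} K(\xi,y) h(y)\,dy$ defines a compact (in fact Hilbert--Schmidt) operator on $L^2([a_1,a_2],\mathbb{R}^n)$, because $K$ is bounded on the bounded square $[a_1,a_2]^2$. Since $M_0$ is invertible, $\mathcal{T} = M_0(\text{Id} - M_0^{-1}\mathcal{K})$ is a compact perturbation of an invertible operator, hence Fredholm of index zero; in particular, invertibility of $\mathcal{T}$ is equivalent to $\ker(\mathcal{T})=\{0\}$, and in any case $\ker(\mathcal{T})$ is finite-dimensional.

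Next I would argue by contradiction, implicitly working in the complexification $L^2([a_1,a_2],\mathbb{C}^n)$ to match the formulation of hypothesis 4. Suppose $\ker(\mathcal{T}) \ne \{0\}$. By hypothesis 1, $\mathcal{A}h$ is defined for every $h \in \ker(\mathcal{T})$; by hypothesis 3, $\mathcal{T}(\mathcal{A}h) = 0$, so $\mathcal{A}$ leaves the finite-dimensional space $\ker(\mathcal{T})$ invariant. Its restriction therefore admits at least one eigenpair $(s,h)$ with $h\ne 0$ and $h \in \ker(s\,\text{Id}-\mathcal{A})$. Hypothesis 2 gives $h \in \ker(\mathcal{B})$, and hypothesis 4 then forces $h=0$, a contradiction. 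Hence $\mathcal{T}$ is invertible.

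For the regularity of $\mathcal{T}^{-1}$, I would exploit the identity $\mathcal{T}^{-1}g = M_0^{-1}g + M_0^{-1}\mathcal{K}\mathcal{T}^{-1}g$ together with the classical Fredholm resolvent construction: the inverse takes the form $\mathcal{T}^{-1} = M_0^{-1}\text{Id} + \mathcal{R}$, where $\mathcal{R}$ is an integral operator whose kernel is obtained from finitely many bounded integrals of iterated products of $K$ (through the iterated-kernel series once $\mathcal{T}$ is known to be invertible, or through the Fredholm determinant formulas in general). Because $K$ is bounded and piecewise continuous on a bounded square, this resolvent kernel inherits the same regularity, and $\mathcal{R}$ enjoys the same regularizing property as $\mathcal{K}$. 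The delicate point, in my view, is really the compactness step for a merely piecewise continuous $K$ that guarantees finite-dimensionality of $\ker(\mathcal{T})$; once this is secured, extracting an eigenvector and closing the contradiction via hypothesis 4 is immediate.
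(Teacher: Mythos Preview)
Your argument is correct and is the standard route: compactness of the integral part yields Fredholm index zero, hence a finite-dimensional kernel; hypotheses 1 and 3 make this kernel $\mathcal{A}$-invariant, so (after complexification) it contains an eigenvector, which hypotheses 2 and 4 then force to vanish. The paper does not actually prove this lemma but defers to \cite[Lemma 2.2]{coron2016stabilization}; your proof is essentially the argument given there, so there is no methodological divergence to report.

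One minor caution on the regularity clause: invertibility of $\mathcal{T}$ alone does not make the Neumann (iterated-kernel) series converge, so the parenthetical ``once $\mathcal{T}$ is known to be invertible'' is misleading; the Fredholm determinant/minor formulas you also mention are the correct general justification that the resolvent kernel inherits the piecewise-continuous regularity of $K$.
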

\begin{proof}
The proof can be found in~\cite[Lemma 2.2]{coron2016stabilization}.
 \end{proof} 
As it will appear in the paper, \modif{condition 4)} of Lemma~\ref{Chap_4_lemma:operator_invert} relates to the controllability condition given in Assumption~\ref{Spectral controllability}. 

\section{Design of a stabilizing controller when $b\ne 0$} \label{Sec_Result_b_ne_0}
In this section, we present our methodology to stabilize the system~\eqref{eq_neutral} in the case $b\ne 0$. We will show that the stabilization problem boils down to the
existence of a solution to simple integral equations with a convolution structure. The proposed approach appears to be simpler than the one proposed in~\cite{redaud2022stabilizing} as it does not require introducing a comparison
system and involves less intricate computations.
\subsection{Candidate control law and characteristic equation} \label{Sec_Candidate}
We look for the desired control law under the form
\begin{align} \label{Chap_4_eq_U_explicit}
    U(t)&=\int_{0}^{\tau_0} f(\nu) x(t-\nu) d\nu+ \int_{0}^{\tau_1} g(\nu)  U(t-\nu)  d\nu,
\end{align}
where $f$ and $g$ are piecewise continuously differentiable functions to be defined. Let us consider the extended variable $z(t)=(x(t),U(t))^T$. For all $t>0$, the state $z(t)$ verifies
\begin{align}
    &z(t)=\begin{pmatrix}a& 0 \\ 0 &0\end{pmatrix}z(t-\tau_0)+\int_0^{\tau_0} \begin{pmatrix}N(\nu)&0\\f(\nu)&0\end{pmatrix} z(t-\nu)d\nu \nonumber \\
    &+\begin{pmatrix}0& b\\ 0 &0\end{pmatrix}z(t-\tau_1)+\int_0^{\tau_1} \begin{pmatrix}0&M(\nu)\\0&g(\nu)\end{pmatrix}z(t-\nu) d\nu. \label{eq_vector_z}
\end{align}
Taking the Laplace transform of equation~\eqref{eq_vector_z}, we obtain $ z(s)=A(s)z(s)$, where $A(s)$ is defined as
\footnotesize
\begin{align*}
    A(s)=\begin{pmatrix}a\mathrm{e}^{-\tau_0s}+\int_0^{\tau_0}\mathrm{e}^{-\nu s}N(\nu)d\nu& b\mathrm{e}^{-\tau_1s}+\int_0^{\tau_1}\mathrm{e}^{-\nu s}M(\nu)d\nu \\ \int_0^{\tau_0}\mathrm{e}^{-\nu s}f(\nu)d\nu &\int_0^{\tau_1}\mathrm{e}^{-\nu s}g(\nu)d\nu\end{pmatrix}
\end{align*}
\normalsize
Therefore, the characteristic equation associated with the closed-loop system~\eqref{eq_vector_z} verifies for all $s\in \mathbb{C}$
\begin{align}
   0&=\det(\text{Id}-A(s))=1-a\mathrm{e}^{-\tau_0 s}-\int_0^{\tau_0}\mathrm{e}^{-\nu s}N(\nu)d\nu \nonumber \\
   &- \int_0^{\tau_1}\mathrm{e}^{-\nu s}g(\nu)d\nu-\int_0^{\tau_1}(\int_0^{\tau_0}\mathrm{e}^{-(\nu+\eta) s}f(\nu)d\nu)M(\eta)d\eta\nonumber \\
   &+a \int_0^{\tau_1}\mathrm{e}^{-(\nu+\tau_0) s}g(\nu)d\nu-b\int_0^{\tau_0}\mathrm{e}^{-(\nu+\tau_1) s}f(\nu)d\nu\nonumber \\
   &+\int_0^{\tau_0}(\int_0^{\tau_1}\mathrm{e}^{-(\nu+\eta) s}g(\nu)d\nu)N(\eta)d\eta. \label{eq_charac_brut}
\end{align}
To analyze the closed-loop stability of equation~\eqref{eq_vector_z}, it is equivalent to analyze the root location of equation~\eqref{eq_charac_brut}, as stated in the following lemma
\begin{lemma}
The closed-loop system~\eqref{eq_vector_z} is exponentially stable if and only if there exists~$\eta_0>0$ such that all solutions of the characteristic equation~\eqref{eq_charac_brut} satisfy~$\text{Re}(s)<-\eta_0$.
\end{lemma}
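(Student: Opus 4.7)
The plan is to establish this equivalence via classical spectral theory for integral delay equations, treating~\eqref{eq_vector_z} as an IDE in the extended variable $z(t)=(x(t),U(t))^T$. The key structural observation is that the principal part of this extended IDE is governed by the matrix $\mathrm{diag}(a,0)$, whose spectral radius is $|a|<1$. Thus the principal part is exponentially stable, which is the prerequisite for the spectrum-determined growth property to hold for neutral-type and integral delay equations (see \cite{halebook}).

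For the necessity direction, I would assume exponential stability with decay rate $\nu>0$ and take the Laplace transform of $z$. The trajectory bound implies that the Laplace transform of $z$ is analytic on the half-plane $\mathrm{Re}(s)>-\nu$, while any characteristic root $s_\star$ with $\mathrm{Re}(s_\star)>-\nu$ would contribute an undamped mode of the form $\mathrm{e}^{s_\star t}$ to suitable initial data, contradicting the assumed decay. Any $\eta_0\in(0,\nu)$ then meets the requirement.

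For sufficiency, I would invoke the spectrum-determined growth result for IDEs with exponentially stable principal part. Because $|a|<1$, in any right half-plane $\mathrm{Re}(s)\geq \alpha$ there are at most finitely many roots of $\det(\mathrm{Id}-A(s))=0$, and on vertical lines sufficiently far to the left of these roots the resolvent $(\mathrm{Id}-A(s))^{-1}$ is uniformly bounded. The solution of~\eqref{eq_vector_z} admits an inverse-Laplace/Bromwich representation; shifting the contour from a line to the right of the spectrum into the half-plane $\mathrm{Re}(s)<-\eta_0$ picks up only finitely many residues, each contributing a polynomial-times-exponential term decaying at least as $\mathrm{e}^{-\eta_0 t}$, while the residual contour integral is absolutely convergent and bounded by $C\mathrm{e}^{-\eta_0 t}(\|x^0\|+\|U^0\|)$. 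Combining the two contributions yields the desired exponential decay.

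The principal obstacle is the uniform resolvent bound on vertical lines in the left half-plane needed to justify the contour shift. This is not automatic for neutral-type equations in general, but it does follow from the hypothesis $|a|<1$, which guarantees that the symbol $\det(\mathrm{Id}-A(s))$ is bounded away from zero outside a horizontal strip containing all characteristic roots. Rather than reprove this standard fact, my plan would be to cite \cite[Ch.~9]{halebook} (or the corresponding results in \cite{niculescu2001delay}), since the argument for the extended IDE~\eqref{eq_vector_z} is identical to the well-known scalar case.
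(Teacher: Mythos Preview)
Your approach is essentially the paper's: the paper's entire proof is a citation to \cite[Theorem 3.5]{halebook} and \cite{henry1974linear}, and you likewise plan to invoke the spectrum-determined growth theorem from \cite{halebook} after checking that the principal part of~\eqref{eq_vector_z} is exponentially stable. One small correction: the principal part of~\eqref{eq_vector_z} is not governed by $\mathrm{diag}(a,0)$ alone, since there is also the pointwise term $\begin{psmallmatrix}0&b\\0&0\end{psmallmatrix}z(t-\tau_1)$; however, the associated difference equation has characteristic determinant $\det\bigl(\mathrm{Id}-\begin{psmallmatrix}a&0\\0&0\end{psmallmatrix}\mathrm{e}^{-\tau_0 s}-\begin{psmallmatrix}0&b\\0&0\end{psmallmatrix}\mathrm{e}^{-\tau_1 s}\bigr)=1-a\mathrm{e}^{-\tau_0 s}$ by upper-triangularity, so your conclusion that $|a|<1$ suffices for exponential stability of the principal part still stands.
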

\begin{proof}
    The proof is in~\cite[Theorem 3.5]{halebook} and \cite{henry1974linear}.
\end{proof}
To analyze the root location of equation~\eqref{eq_charac_brut}, we first simplify it. Using Fubini's theorem, we can rewrite the double integrals. We have
\begin{align*}
    &\int_0^{\tau_0}(\int_0^{\tau_1}\mathrm{e}^{-(r+\eta) s}g(\eta)d\eta)N(r)dr=\color{black}\int_0^{\tau_1}g(\eta)\\
    &\color{black}(\int_\eta^{\eta+\tau_0}\mathrm{e}^{-\nu s}N(\nu-\eta)d\nu)d\eta=\color{black}\int_0^{\tau_0} (\int_0^\nu g(\eta)N(\nu\\
    &-\eta) d\eta)\mathrm{e}^{-\nu s}d\nu+\int_{\tau_0}^{\tau_1}(\int_{\nu-\tau_0}^\nu g(\eta)N(\nu-\eta)d\eta)\mathrm{e}^{-\nu s}d\nu\\
    &\quad+\int_{\tau_1}^{\tau_1+\tau_0}(\int_{\nu-\tau_0}^{\tau_1} g(\eta)N(\nu-\eta)d\eta)\mathrm{e}^{-\nu s}d\nu.
\end{align*}
We then Perform similar computations for the second integral. For all $s\in \mathbb{C}$, the characteristic equation~\eqref{eq_charac_brut} then rewrites
\begin{align}
   0=1-a\mathrm{e}^{-\tau_0 s}&-\int_0^{\tau_0}\mathrm{e}^{-\nu s}I_1(\nu)d\nu-\int_{\tau_0}^{\tau_1}\mathrm{e}^{-\nu s}I_2(\nu)d\nu\nonumber \\
   &-\int_{\tau_1}^{\tau_0+\tau_1}\mathrm{e}^{-\nu s}I_3(\nu)d\nu, \label{eq_charac_final}
\end{align}
where 
\begin{align}
        I_1(\nu)=&g(\nu)+N(\nu)+\int_0^\nu f(\eta)M(\nu-\eta)d\eta \nonumber \\
        &-\int_0^\nu g(\eta)N(\nu-\eta)d\eta,  \label{Chap_4_eq_def_I_1} \\
    I_2(\nu)=&g(\nu)-ag(\nu-\tau_0)+\int_0^{\tau_0} f(\eta)M(\nu-\eta)d\eta \nonumber \\
    &-\int_{\nu-\tau_0}^\nu g(\eta)N(\nu-\eta)d\eta, \label{Chap_4_eq_def_I_2} \\
    I_3(\nu)=&bf(\nu-\tau_1)+\int_{\nu-\tau_1}^{\tau_0} f(\eta)M(\nu-\eta)d\eta \nonumber \\
    &-ag(\nu-\tau_0)-\int_{\nu-\tau_0}^{\tau_1} g(\eta)N(\nu-\eta)d\eta,  \label{Chap_4_eq_def_I_3}
\end{align}
\normalsize
Provided that we can choose $f$ and $g$ such that $I_1\equiv 0$, $I_2\equiv 0$, and $I_3\equiv 0$ on their respective domain of definition, we would obtain the characteristic equation $1=a\mathrm{e}^{-\tau_0 s}$, which implies the exponential stability of $z$ since $|a|<1$. 
\modif{
\begin{remark}
    Our control approach consists of canceling the integral term $\int_0^{\tau_0} N(\nu) x(t-\nu)d\nu$ to obtain the exponentially stable system $x(t) = ax(t-\tau_0)$. Note that it should be possible to adjust the control strategy to modify the convergence rate. For instance, when $\tau_1=\tau_0$, we can first define the intermediate control input $\hat U(t)=U(t)+\frac{a-\bar a}{b}x(t)$, which will replace the coefficient $a$ by $\bar a$ (while also modifying the function $N$), before applying the integral cancellation procedure. However, we emphasize that this improvement in terms of convergence rate would be done at the cost of degraded robustness margins as shown in~\cite{auriol2020robust}. In the rest of the paper, we only focus on the cancellation of the integral term $\int_0^{\tau_0} N(\nu) x(t-\nu)d\nu$.
\end{remark}}

\subsection{Existence of the functions $f$ and $g$}
In this section, we prove that there exist $f$ and $g$ piecewise continuously differentiable functions such that the functions $I_1, I_2$ and $I_3$ defined in equations~\eqref{Chap_4_eq_def_I_1}-\eqref{Chap_4_eq_def_I_3} are identically equal to zero. More precisely, we have the following lemma.
\begin{lemma}\label{Chap_4_lemma_unique}
    Consider the functions $I_1$, $I_2$, and $I_3$ defined in equations~\eqref{Chap_4_eq_def_I_1}-\eqref{Chap_4_eq_def_I_3}. If $b\ne 0$, and if Assumption~\ref{Spectral controllability} is verified,  then there exist two unique piecewise continuously differentiable functions $(f,g)$ s.t. $I_1(\nu)=0$ for $\nu \in [0,\tau_0[$, $I_2(\nu)=0$ for $\nu \in [\tau_0,\tau_1[$, and $I_3(\nu)=0$ for $\nu \in [\tau_1,\tau_1+\tau_0]$.
\end{lemma}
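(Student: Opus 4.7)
My plan is to reduce the system $I_1 = I_2 = I_3 \equiv 0$ to a single Fredholm integral equation for $f$ on $[0,\tau_0]$, and then invoke Lemma~\ref{Chap_4_lemma:operator_invert}.

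First I would exploit the Volterra structure hidden in $I_1$ and $I_2$. Viewed as an equation for $g$ with $f$ held as a parameter, $I_1(\nu)=0$ on $[0,\tau_0)$ is a linear Volterra equation of the second kind with kernel $N$ and source $-N(\nu)-\int_0^{\nu} f(\eta)M(\nu-\eta)\,d\eta$; it has a unique piecewise $C^1$ solution which is affine in $f$. The shift term $a\,g(\nu-\tau_0)$ appearing in $I_2(\nu)=0$ on $[\tau_0,\tau_1)$ evaluates $g$ at a strictly earlier point, so once $g$ has been determined on $[0,\tau_0)$, equation $I_2 = 0$ is again a forward Volterra equation for $g$ on $[\tau_0,\tau_1)$. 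Composing both steps yields a representation $g = \mathcal{L} f + \Phi_0$, where $\mathcal{L}$ is a bounded Volterra operator $L^2([0,\tau_0]) \to L^2([0,\tau_1])$ and $\Phi_0$ is a fixed source built from $N$.

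Next, since $b \neq 0$, I substitute this representation into $I_3 = 0$. After the change of variable $u = \nu - \tau_1 \in [0,\tau_0]$, equation~\eqref{Chap_4_eq_def_I_3} expresses $b\,f(u)$ as a combination of $f$ on $[u,\tau_0]$ and $g$ evaluated on $[u+\tau_1-\tau_0,\tau_1]$; after replacing $g$ by $\mathcal{L} f + \Phi_0$, all remaining dependencies on $f$ run over the full interval $[0,\tau_0]$. I obtain
\begin{equation*}
 b\, f(u) - \int_0^{\tau_0} \mathcal{K}(u,y)\, f(y)\, dy = \Psi(u), \qquad u \in [0,\tau_0],
\end{equation*}
with a bounded piecewise continuous kernel $\mathcal{K}$ and a piecewise $C^1$ source $\Psi$ built from $N$ and $M$. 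This is exactly the form~\eqref{Chap_4_eq:fredholm_operator} with scalar $M_0 = b \neq 0$, so the stabilization problem is reduced to the invertibility of the associated operator $\mathcal{T}$.

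Finally I would apply Lemma~\ref{Chap_4_lemma:operator_invert}. The task reduces to exhibiting operators $\mathcal{A}$ and $\mathcal{B}$ on $L^2([0,\tau_0])$ fulfilling hypotheses 1)--4). Following the template of~\cite{coron2016stabilization}, I take $\mathcal{A}$ to be (a shift-based variant of) differentiation, so that 1) is automatic from the regularizing effect of the integral part, and $\mathcal{B}$ to collect two boundary values of $f$; hypothesis 3) amounts to checking that differentiating $\mathcal{T}f$ keeps kernel elements in the kernel. For any $h \in \ker \mathcal{T}$, replaying the derivation of the characteristic equation that produced~\eqref{eq_charac_final} (with its $N(\nu)$-source switched off and $f=h$) yields an algebraic identity of the form $\alpha(s,h)\,F_0(s) + \beta(s,h)\,F_1(s) = 0$ for every $s\in\mathbb{C}$, where $(\alpha,\beta)$ encodes precisely $\mathcal{B} h$; hypothesis 4) then coincides with Assumption~\ref{Spectral controllability}. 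The main obstacle will be this last step: the precise bookkeeping that turns the substitution of $g = \mathcal{L} f + \Phi_0$ into a clean Laplace identity between $\ker \mathcal{T}$ and the pair $(F_0,F_1)$. Once the lemma delivers invertibility and the announced regularity for $\mathcal{T}^{-1}$, the unique $f$ it produces, together with $g=\mathcal{L}f+\Phi_0$, furnishes the desired pair of piecewise continuously differentiable functions, uniqueness being inherited from the injectivity of $\mathcal{T}$ and of the Volterra resolution.
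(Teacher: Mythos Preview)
Your reduction is sound and genuinely different from the paper's route. You eliminate $g$ by resolving the Volterra blocks $I_1=0$ and $I_2=0$ forward in time and land on a \emph{scalar} Fredholm equation for $f$ on $[0,\tau_0]$. The paper instead keeps $g$ in play: it slices $g$ into pieces $g_k(\nu)=g(\nu+k\tau_0-\gamma)$ (where $\tau_1=(n_0+1)\tau_0-\gamma$) and recasts $I_1=I_2=I_3=0$ as a single \emph{vector} Fredholm equation on $L^2([0,\tau_0],\mathbb{R}^{n_0+2})$ for $(g_0,\dots,g_{n_0},f)$. Your scalar equation is the Schur complement of theirs after inverting the Volterra block, so invertibility of the two operators is equivalent; what each buys is different. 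Keeping the vector structure lets the paper write down $A_{\mathcal T}$ and $B_{\mathcal T}$ explicitly (differentiation plus a rank-one term, with transport-type boundary conditions $\phi(\tau_0)=a\phi(0)$, $\psi_{n_0}(\tau_0)=b\phi(0)$, $\psi_k(\tau_0)=\psi_{k+1}(0)$, and $B_{\mathcal T}h=\psi_0(0)$), and then condition~4) of Lemma~\ref{Chap_4_lemma:operator_invert} is a short ODE computation: any $h\in\ker(s\mathrm{Id}-A_{\mathcal T})\cap\ker B_{\mathcal T}$ forces $F_0(s)f(0)=0$ and $F_1(s)f(0)=0$, hence $h=0$ by Assumption~\ref{Spectral controllability}.

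The gap in your argument is precisely the step you flag as the ``main obstacle''. By composing with the Volterra resolvent of $N$, your kernel $\mathcal K$ no longer has the convolution-plus-shift structure that makes a differentiation-type $\mathcal A$ work cleanly; you have not exhibited any concrete pair $(\mathcal A,\mathcal B)$ for which conditions 1)--4) can be checked, and the guess that $\mathcal B$ should collect ``two boundary values of $f$'' is not justified (in the paper's construction, $B_{\mathcal T}$ is a \emph{single} scalar evaluation). Your discussion of condition~4) is also mis-stated: that condition concerns $\ker(s\mathrm{Id}-\mathcal A)\cap\ker\mathcal B$, not $\ker\mathcal T$, so an identity derived ``for any $h\in\ker\mathcal T$'' does not address it. If you want to push your scalar approach through, the cleanest fix is to \emph{re-expand}: on $\ker(s\mathrm{Id}-\mathcal A)$ reconstruct $g=\mathcal L f$ (the source $\Phi_0$ drops out at the homogeneous level), observe that $(g,f)$ then satisfies the vector eigenproblem the paper analyses, and import its computation; but at that point you have essentially reproduced the paper's vector proof inside your scalar framework.
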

\begin{proof}
    We first rewrite the delay $\tau_1$ as $ \tau_1=(n_0+1)\tau_0-\gamma,$ 
     where $n_0\in \mathbb{N}$ and $\gamma \in [0,\tau_0)$. Let us now 
    introduce, for all integer $1\leq k\leq n_0$, the intermediate functions $g_k$ defined for all $\nu \in (0,\tau_0]$ by $
        g_k(\nu)
        =$$g(\nu+k\tau_0-\gamma)$. 
        We also define the function $g_0$, for all $\nu \in (0,\tau_0]$, by \begin{align*}
            g_0(\nu)=\left\{
    \begin{array}{ll}
        g(\nu-\gamma) & \mbox{if $\nu \geq \gamma$}  \\
        0 & \mbox{if $\nu<\gamma$.}
    \end{array}
\right.
        \end{align*}
        We extend the functions $N$ (respectively $M$) by the value $0$ outside of the interval $[0,\tau_0]$ (respectively $[0,\tau_1]$).
     The system $I_1(\nu)=0, I_2(\nu)=0, I_3(\nu)=0$ rewrites as
    \begin{align}
    &-N(\nu-\gamma)=g_0(\nu)-\int_0^\nu g_0(\eta)N(\nu-\eta)d\eta\nonumber \\
    &\hspace{2cm}+\int_0^\nu f(\eta)M(\nu-\gamma-\eta)d\eta,\label{Chap_4_eq_g0}\\
    &-N(\nu+k\tau_0-\gamma)= g_k(\nu)-\int_0^{\nu} g_k(\eta)N(\nu-\eta)d\eta \nonumber \\
    &\hspace{1cm}-ag_{k-1}(\nu)-\int_{\nu}^{\tau_0} g_{k-1}(\eta)N(\nu-\eta+\tau_0)d\eta\nonumber \\
    &\hspace{2cm}+\int_0^{\tau_0} f(\eta)M(\nu+k\tau_0-\gamma-\eta)d\eta,\label{Chap_4_eq_gk} \\
    &0=bf(\nu)-ag_{n_0}(\nu) -\int_\nu^{\tau_0} g_{n_0}(\eta)N(\nu+\tau_0-\eta)d\eta \nonumber \\
    &\hspace{2cm}+\int_\nu^{\tau_0} f(\eta)M(\nu+\tau_1-\eta)d\eta,\label{Chap_4_eq_f}
    \end{align}
    where $1< k \leq n_0$ and $\nu \in (0,\tau_0]$. \modif{More precisely, equation $I_3(\nu)=0$ is equivalent to equation~\eqref{Chap_4_eq_f}. Equation~\eqref{Chap_4_eq_g0} (for $\nu \in [0,\tau_0]$)  is equivalent to $I_1(\nu)=0$ (for $\nu \in [0,\tau_0-\gamma]$), while equation~\eqref{Chap_4_eq_gk} (for $k=1$, $\nu \in [0,\gamma]$)  is equivalent to $I_1(\nu)=0$ (for $\nu \in (\tau_0-\gamma,\tau_0]$). Finally, equation~\eqref{Chap_4_eq_gk} (except for $k=1$, $\nu \in [0,\gamma]$) is equivalent to $I_2=0$. This shift between the two systems is due to the parameter $\gamma$.  }
    
    We now define the operator $\mathcal{T}:(L^2([0,\tau_0],\mathbb{R}^{n_0+2})) \rightarrow (L^2([0,\tau_0],\mathbb{R}^{n_0+2})) $ in equation~\eqref{eq_def_operator} \modif{(given in the next page)}. Note that the operator~$\mathcal{T}$ is a Fredholm operator as it satisfies equation~\eqref{Chap_4_eq:fredholm_operator}. 
    \begin{figure*}[!t]
\normalsize
\begin{align}
      ( \mathcal{T} \begin{pmatrix}
           g_0 \\ \vdots \\ g_{k}\\ \\\vdots  \\ f
       \end{pmatrix})(\nu)=  \begin{pmatrix}
       g_0(\nu)-\int_0^\nu g_0(\eta)N(\nu-\eta)d\eta+\int_0^\nu f(\eta)M(\nu-\gamma-\eta)d\eta \\
       \vdots\\
       g_k(\nu)-ag_{k-1}(\nu)-\int_{\nu}^{\tau_0} g_{k-1}(\eta)N(\nu-\eta+\tau_0)d\eta-\int_0^{\nu} g_k(\eta)N(\nu-\eta)d\eta\\+\int_0^{\tau_0} f(\eta)M(\nu+k\tau_0-\gamma-\eta)d\eta\\ 
       \vdots \\
           bf(\nu)-ag_{n_0}(\nu) -\int_\nu^{\tau_0} g_{n_0}(\eta)N(\nu+\tau_0-\eta)d\eta +\int_\nu^{\tau_0} f(\eta)M(\nu+\tau_1-\eta)d\eta 
       \end{pmatrix}.\label{eq_def_operator}
    \end{align}
\hrulefill
\end{figure*}
    We want to show that equations~\eqref{Chap_4_eq_g0}-\eqref{Chap_4_eq_f} admit a unique solution. This will result from the invertibility of the operator $\mathcal{T}$. To show this latter property,
 let us introduce the operators $A_\mathcal{T}$ defined by
    \begin{align}
&A_\mathcal{T}:~D(A_\mathcal{T}) \rightarrow L^2([0,\tau_0],\mathbb{R}^{n_0+2})\nonumber \\
&\begin{pmatrix}\psi_0\\\vdots \\ \psi_{n_0}\\  \phi \end{pmatrix}
\longmapsto \begin{pmatrix}\partial_x \psi_0+\phi(0)M(\cdot-\gamma)\\\vdots \\\partial_x \psi_{n_0} +\phi(0)M(\cdot+n_0\tau_0-\gamma)\\ \partial_x \phi +\phi(0)N(\cdot)  
\end{pmatrix}, \label{Chap_4_op_Ademo}
\end{align}
    where  $D(A_\mathcal{T})=\{\modif{(\psi_0,\dots,\psi_{n_{0}},\phi)} \in (H^1([0,\tau_0],\mathbb{R}^{n_{0}+2})),~\phi(\tau_0)=a\phi(0),~\psi_{n_{0}}(\tau_0)=b\phi(0),\psi_k(\tau_0)=\psi_{k+1}(0),~0\leq k<{n_{0}}\}$. We define the operator $B_\mathcal{T}:~D(A_\mathcal{T}) \rightarrow (L^2([0,\tau_0],\mathbb{R}^{{n_{0}}+2}))$, by 
    $$
B_{\mathcal{T}}(\begin{pmatrix}\psi_0& \cdots & \psi_{n_{0}}& \phi \end{pmatrix}^\top)=\psi_0(0).
$$
\modif{To find the operators~$A_\mathcal{T}$ and $B_\mathcal{T}$, we modified the PDE comparison system proposed in~\cite[eq. (13)-(14)]{redaud2021stabilizing} by adding extra states so all equations have the same velocities.}

We now show that the operators $\mathcal{T}$, $A_\mathcal{T}$ and $B_\mathcal{T}$ verify the requirements of Lemma~\ref{Chap_4_lemma:operator_invert}. Let us consider $h=(g_0, \dots, g_{n_{0}}, f)$ in $\ker(\mathcal{T})$.  \modif{We immediately get $g_0(0)=0$. Consequently, we obtain $g_1(0)=g_0(\tau_0)$.} We have for all $1\leq k < {n_{0}}$
\begin{align*}
    g_{k+1}(0)-g_k(\tau_0)=a(g_k(0)-g_{k-1}(\tau_0)).
\end{align*}
 We can then recursively show that $g_k(\tau_0)=g_{k+1}(0)$. Direct computations give $bf(0)=g_{n_0}(\tau_0)$ and $f(\tau_0)=af(0)$ (since $b\ne 0)$. Consequently $h \in D(A_\mathcal{T})$. Since  $g_0(0)=0$, we also have $h \in  \ker(B_\mathcal{T})$. 
We now need to show the third condition of Lemma~\ref{Chap_4_lemma:operator_invert}, i.e., for any $h\in \ker(\mathcal{T})$, we want to show that $\mathcal{T}(A_{\mathcal{T}}(h))=0$. Due to space restriction, we consider the case of continuously differentiable functions $N$ and $M$, but the proof can easily be adjusted to the case of a finite number of discontinuities. Let us compute the last component of $\mathcal{T}(A_{\mathcal{T}}(h))$. We have for all $\nu \in [0,\tau_0]$
\begin{align}
    &bf'(\nu)+bf(0)N(\nu)-ag_{n_0}'(\nu)-af(0)M(\nu+n_0\tau_0-\gamma)\nonumber \\
    &-\int_\nu^{\tau_0}((g_{n_0}'(\eta)+f(0)M(\eta+n_0\tau_0-\gamma))N(\nu+\tau_0-\eta)d\eta\nonumber \\
    &+\int_\nu^{\tau_0}(f'(\eta)+f(0)N(\eta))M(\nu+\tau_1-\eta))d\eta.
   \label{Chap_4_eq_f_calcul}
\end{align}
\modif{After integration by parts, this term rewrites
\begin{align*}
     &bf'(\nu)-ag_{n_0}'(\nu)+g_{n_0}(\nu)N(\tau_0)-f(\nu)M(\tau_1)\nonumber\\
    &+\int_\nu^{\tau_0}(f(\eta)M'(\nu+\tau_1-\eta)-g_{n_0}(\eta)N'(\nu+\tau_0-\eta))d\eta, 
\end{align*}
where we have used that $g_{n_0}(\tau_0)=bf(0)$ and $f(\tau_0)=af(0)$, since $g \in D(A_{\mathcal{T}})$.
}
In the meantime, since $h \in \ker(\mathcal{T})$, we also have \modif{
\begin{align*}
&0=bf'(\nu)-ag_{n_0}'(\nu)+g_{n_0}(\nu)N(\tau_0)-f(\nu)M(\tau_1)\\
&+\int_\nu^{\tau_0}(f(\eta)M'(\nu+\tau_1-\eta)-g_{n_0}(\eta)N'(\nu+\tau_0-\eta))d\eta.
\end{align*}}
Therefore, we obtain that the last component of $\mathcal{T}(A_{\mathcal{T}}(h))$ equals $0$.
 Performing analogous computations, for all the lines of $\mathcal{T}(A_{\mathcal{T}}(h))$, we obtain that $A_{\mathcal{T}}(h) \in \ker(\mathcal{T})$. We now need to prove the last requirement of Lemma~\ref{Chap_4_lemma:operator_invert}.
Consider $s\in\mathbb{C}$ and $h=(g_0,g_{n_0},\cdots,f) \in \ker(s\text{Id}-A_{\mathcal{T}}) \cap \ker(B_{\mathcal{T}})$. We have for all $0 \leq k \leq n_0$ and all $\nu \in [0,\tau_0]$, $sf (\nu)=f'(\nu)+f(0)N(\nu),$ and $sg_k(\nu)=g_k'(\nu)+f(0)M(\nu+k\tau_0-\gamma).$ Consequently, we obtain
    \begin{align}
    g_k(\nu)&=\mathrm{e}^{s\nu}g_k(0)-f(0) \int_0^{\nu}M(\eta+k\tau_0-\gamma)\mathrm{e}^{s(\nu-\eta)}d\eta, \nonumber \\
    f(\nu)&=(\mathrm{e}^{s\nu}-\int_0^{\nu}N(\eta)\mathrm{e}^{s(\nu-\eta)}d\eta)f(0), \label{eq_f_f0}
\end{align}
Since $f(\tau_0)=af(0)$, we obtain $af(0)=(\mathrm{e}^{s\tau_0}-\int_0^{\tau_0}N(\eta)\mathrm{e}^{s(\tau_0-\eta)}d\eta)f(0),$
which gives $F_0(s)f(0)=0$, where $F_0$ is defined in equation~\eqref{eq_F_0}. In the meantime, we obtain \small 
\begin{align*}
bf(0)&=g_{n_0}(\tau_0)\\
&=\mathrm{e}^{s\tau_0}g_{n_0}(0)-f(0)\int_0^{\tau_0}M(\eta+n_0\tau_0-\gamma)\mathrm{e}^{s(\tau_0-\eta)}d\eta \\
&=\mathrm{e}^{s\tau_0}g_{n_0-1}(\tau_0)-\modif{f(0)}\int_{n_0\tau_0-\gamma}^{\tau_1}M(\eta)\mathrm{e}^{s(\tau_1-\eta)}d\eta.
\end{align*}
\normalsize
Iterating the procedure, we obtain $F_1(s)f(0)$,  where $F_1$ is defined in equation~\eqref{eq_F_1}.  \modif{Due to Assumption~\ref{Spectral controllability}, we obtain that $f(0)=0$. Consequently, using equation~\eqref{eq_f_f0}, we have $h(\nu)=0$ for all $\nu \in [0,\tau_0]$,  and the last requirement of Lemma~\ref{Chap_4_lemma:operator_invert} is verified.}
 Consequently, the operator $\mathcal{T}$ is invertible and equations~\eqref{Chap_4_eq_g0}-\eqref{Chap_4_eq_f} admit a unique solution in $(L^2([0,\tau_0],\mathbb{R}^{n_0+2}))$. These solutions are piecewise continuously differentiable due to the regularity of the functions $M$ and $N$ and to the regularizing effect of the integral. 
\end{proof}
\modif{
\begin{remark}
    The functions $f$ and $g$ are bounded piecewise continuously differentiable functions that can be expressed in terms of the kernels $M$ and $N$ and the coefficients $a$ and $b$, using the operator $\mathcal{T}^{-1}$. However, their bounds can be significantly large if we are close to the critical case $b=0$.\end{remark} 
\begin{remark}
    We emphasize that Assumption~\ref{Spectral controllability} is necessary to prove Lemma~\ref{Chap_4_lemma_unique} (and in particular the last point of Lemma~\ref{Chap_4_lemma:operator_invert}) The proposed proof does not hold if the controllability condition is replaced by a weaker stabilizability condition.\end{remark} }
\subsection{Stabilizing control law}
Using Lemma~\ref{Chap_4_lemma_unique}, we can now write the following theorem.
\begin{theorem}
\label{Chap_4_State_feedback}
Assume that $b\ne 0$ and that Assumption~\ref{Spectral controllability} is verified.
Consider the functions $I_1$, $I_2$ and $I_3$ defined in~\eqref{Chap_4_eq_def_I_1}-\eqref{Chap_4_eq_def_I_3} and let $f$ and $g$ be the unique piecewise continuously differentiable functions that lead to $I_1(\nu)=0$ for all $\nu \in [0,\tau_0]$, $I_2(\nu)=0$ for all $\nu \in  ]\tau_0,\tau_1]$,  and $I_3(\nu)=0$ for $\nu \in  [\tau_1,\tau_0+\tau_1]$ (as stated in Lemma~\ref{Chap_4_lemma_unique}). Then the closed-loop system consisting of the plant~\eqref{eq_neutral} and the control law \eqref{Chap_4_eq_U_explicit} is exponentially stable. 
\end{theorem}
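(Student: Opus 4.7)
The plan is to reduce the closed-loop stability question to the spectral test introduced right after \eqref{eq_charac_brut}, for which most of the work has already been done by Lemma~\ref{Chap_4_lemma_unique}. I would structure the argument in three short steps: well-posedness, characteristic-equation collapse, and spectral conclusion.

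First, I would note that with the piecewise continuously differentiable functions $f$ and $g$ supplied by Lemma~\ref{Chap_4_lemma_unique}, the control law \eqref{Chap_4_eq_U_explicit} is itself an IDE for $U$ driven by the past of $x$. Coupling it to the plant \eqref{eq_neutral} gives the augmented IDE \eqref{eq_vector_z} for $z=(x,U)^T$, whose pointwise (principal) part is the diagonal matrix $\mathrm{diag}(a,0)$; since $|a|<1$, this principal part is exponentially stable and classical existence/uniqueness results for linear IDEs (e.g.\ \cite{halebook}) yield a well-defined closed-loop trajectory for every initial data $(x^0,U^0) \in C^{pw}_{\tau_0}\times C^{pw}_{\tau_1}$.

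Second, I would plug the choice of $f,g$ into the simplified characteristic equation \eqref{eq_charac_final}. By Lemma~\ref{Chap_4_lemma_unique}, the integrands $I_1$, $I_2$, $I_3$ vanish identically on their respective intervals, so \eqref{eq_charac_final} reduces to
\begin{equation*}
1-a\mathrm{e}^{-\tau_0 s}=0.
\end{equation*}
Since $|a|<1$, every solution of this scalar equation satisfies $\mathrm{Re}(s)=\ln|a|/\tau_0<0$, so there exists $\eta_0>0$ (any $\eta_0<-\ln|a|/\tau_0$ works) such that all roots verify $\mathrm{Re}(s)<-\eta_0$.

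Third, I would invoke the equivalence between exponential stability of \eqref{eq_vector_z} and a uniform strict bound on the real parts of its characteristic roots, as stated in \cite[Theorem 3.5]{halebook} and recalled in the preceding lemma. This directly produces constants $C_0,\nu>0$ such that
\begin{equation*}
\|x_{[t]}\|_{C^{pw}_{\tau_0}}+\|U_{[t]}\|_{C^{pw}_{\tau_1}} \leq C_0\,\mathrm{e}^{-\nu t}\bigl(\|x^0\|_{C^{pw}_{\tau_0}}+\|U^0\|_{C^{pw}_{\tau_1}}\bigr),
\end{equation*}
which is exactly the targeted stabilization estimate.

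The only real bookkeeping point, and the place where I expect a careful reader to pause, is verifying that the intervals on which Lemma~\ref{Chap_4_lemma_unique} forces $I_1,I_2,I_3$ to vanish truly cover the intervals $[0,\tau_0]$, $]\tau_0,\tau_1]$ and $[\tau_1,\tau_0+\tau_1]$ appearing in \eqref{eq_charac_final}, since the shift $\gamma$ introduced in the proof of Lemma~\ref{Chap_4_lemma_unique} slightly reindexes equations \eqref{Chap_4_eq_g0}--\eqref{Chap_4_eq_f} with respect to $I_1,I_2,I_3$; this is precisely the content of the remark inside the proof of Lemma~\ref{Chap_4_lemma_unique}. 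Once that bookkeeping is clear, the rest of the theorem is an immediate consequence of the spectral lemma and of the trivial analysis of $1-a\mathrm{e}^{-\tau_0 s}=0$.
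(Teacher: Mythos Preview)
Your proposal is correct and follows exactly the paper's route: existence of $f,g$ from Lemma~\ref{Chap_4_lemma_unique}, collapse of the characteristic equation~\eqref{eq_charac_final} to $1-a\mathrm{e}^{-\tau_0 s}=0$, and the spectral conclusion via \cite[Theorem~3.5]{halebook}; the paper's own proof is the same argument stated in two sentences. One small imprecision: the pointwise part of~\eqref{eq_vector_z} is not just $\mathrm{diag}(a,0)$ but also contains the off-diagonal block $b$ at delay $\tau_1$, though this does not affect your conclusion since the upper-triangular structure still yields the principal characteristic equation $1-a\mathrm{e}^{-\tau_0 s}=0$.
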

\begin{proof}
    The existence of $f$ and $g$ is implied by Lemma~\ref{Chap_4_lemma_unique}. For this choice of $f$ and $g$, the closed-loop characteristic equation rewrites $1=a\mathrm{e}^{-\tau_0 s}$ which implies the exponential stability of the state and the control law since $|a|<1$. 
\end{proof}
\modif{Taking the Laplace transform of equation~\eqref{Chap_4_eq_U_explicit}, we obtain
$U(s)=\frac{\int_0^{\tau_0}f(\nu)\mathrm{e}^{-\nu s}d\nu}{1-\int_0^{\tau_1}g(\nu)\mathrm{e}^{-\nu s}d\nu},$ which defines a strictly proper transfer function due to Riemann Lebesgues' lemma. Having a strictly proper control law guarantees the $w$-stability of the closed-loop system (see \cite[Th. 9.5.4]{curtain2012introduction}) and, consequently, the robustness to input delays and uncertainties on the parameters. It is also robust to admissible additive and multiplicative perturbations~\cite[Theorem 13]{auriol2023robustification}.}
\modif{
\section{Some insights to design a stabilizing controller when $b=0$}} \label{Sec_Result_b_0}
In this section, we consider the degenerate case $b=0$.
Due to space restrictions, we simplify our framework and assume $\tau_0=\tau_1=1$.  We will need the following additional assumption
\begin{assumption}
    The kernels $M$ and $N$ are two times continuously differentiable on $[0,1]$. Moreover, $aM(0)-M(1)\ne 0$ \modif{and $F_0(0) \ne 0$ (where $F_0$ is defined in eq.~\eqref{eq_F_0}).}\label{ass_regularity}
\end{assumption}
This assumption is conservative and is a current limitation of our approach in this critical configuration.
\modif{Let us now consider a stabilizing control law under the following form 
\begin{align} \label{Chap_4_eq_U_explicit_ter}
    U(t)&=\int_{0}^{1} f'(\nu) x(t-\nu) + g(\nu)  U(t-\nu)  d\nu\nonumber \\
    &+x(t)f(0)-x(t-1)f(1),
\end{align}
where $f$ and $g$ are piecewise continuously differentiable functions. Performing analogous computations to the ones done in Section~\ref{Sec_Candidate}, we obtain the characteristic equation~\eqref{eq_charac_final} with $I_2=0$ (since $\tau_1=\tau_0=1$) and where \begin{align}
        I_1(\nu)=&g(\nu)+M(0)f(\nu)+N(\nu)+\int_0^\nu f(\eta)M'(\nu-\eta)d\eta \nonumber \\
        &-\int_0^\nu g(\eta)N(\nu-\eta)d\eta, \label{eq_I_1_bis} \\
    I_3(\nu)=&-M(1)f(\nu-1)+\int_{\nu-1}^{1} f(\eta)M'(\nu-\eta)d\eta \nonumber \\
    &-ag(\nu-1)-\int_{\nu-1}^{1} g(\eta)N(\nu-\eta)d\eta. \label{eq_I_3_bis}
\end{align}
Again, we would like to find $f$ and $g$ such that the integral equations $I_1(\nu)=0$ and $I_3(\nu)=0$ admit a solution. As shown in the proof of Lemma~\ref{Chap_4_lemma_unique}, this can be done by showing that the operator $\mathcal{T}_1$ defined by equation~\eqref{eq_def_operator_bis} (given in the next page) is invertible. 
\begin{figure*}[!t]
    \footnotesize
         \begin{align}
      ( \mathcal{T}_1 \begin{pmatrix}
           g  \\ f
       \end{pmatrix})(\nu)=  \begin{pmatrix}
       g(\nu)+M(0)f(\nu)+\int_0^\nu f(\eta)M'(\nu-\eta)-g(\eta)N(\nu-\eta)d\eta \\ 
     -ag(\nu)-M(1)f(\nu)+\int_{\nu}^1 f(\eta)M'(1+\nu-\eta)d\eta-\int_{\nu}^1 g(\eta)N(1+\nu-\eta)d\eta
   \end{pmatrix}.\label{eq_def_operator_bis}
    \end{align}  
    \vspace{-0.3cm}
\hrulefill
\end{figure*}
The operator $\mathcal{T}_1$  rewrites $\mathcal{T}_1=K\text{Id}+\mathcal{I}$ where $\mathcal{I}$ is the (compact) integral component of the operator $\mathcal{T}_1$ and $K=\begin{pmatrix}
    1 & M(0)\\ -a & -M(1)
\end{pmatrix}$. The determinant of $K$ is equal to $-M(1)+aM(0) \ne 0$ due to Assumption~\ref{ass_regularity}. Therefore, the matrix $M$ is invertible, and $\mathcal{T}_1$ is a Fredholm integral operator. We can then easily adjust the proof of Lemma~\ref{Chap_4_lemma_unique} to show its invertibility. 
\begin{lemma}\label{Chap_4_lemma_unique_bbis}
    Consider the functions $I_1$ and $I_3$ defined in equations~\eqref{eq_I_1_bis}-\eqref{eq_I_3_bis}. 
    If Assumption~\ref{Spectral controllability} and Assumption~\ref{ass_regularity} are satisfied, then there exist two unique continuously differentiable functions $(f,g)$ such that $I_1(\nu)=0$ for $\nu \in [0,1[$, $I_3(\nu)=0$ for $\nu \in [1,2]$.
\end{lemma}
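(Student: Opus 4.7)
The plan is to follow the blueprint of the proof of Lemma~\ref{Chap_4_lemma_unique}, adapted to the degenerate setting $b=0$ and $\tau_0 = \tau_1 = 1$. Existence and uniqueness of $(f,g)$ amount to showing that $\mathcal{T}_1$ in~\eqref{eq_def_operator_bis} is invertible on $L^2([0,1], \mathbb{R}^2)$, since equations $I_1 \equiv 0$ and $I_3 \equiv 0$ can be rewritten as $\mathcal{T}_1(g,f)^\top = (-N(\cdot), 0)^\top$. The paper has already noted that $\mathcal{T}_1 = K\,\mathrm{Id} + \mathcal{I}$ with $K$ invertible (thanks to $aM(0)-M(1)\ne 0$) and $\mathcal{I}$ compact with smooth kernel, so the structural hypotheses of Lemma~\ref{Chap_4_lemma:operator_invert} are met. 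I therefore need only construct two auxiliary operators $\mathcal{A}$ and $\mathcal{B}$ and verify the four conditions of that lemma.

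First I would extract boundary information from any $h=(g,f)\in\ker(\mathcal{T}_1)$. Evaluating the two components of $\mathcal{T}_1 h$ at $\nu=0$ and $\nu=1$ and inverting $K$, I expect the identities $g(0)=-M(0)f(0)$, $g(1)=-M(1)f(0)$, and $f(1)=a f(0)$. These suggest, in analogy with~\eqref{Chap_4_op_Ademo}, defining
\begin{align*}
\mathcal{A}\begin{pmatrix}\psi\\ \phi\end{pmatrix}
=\begin{pmatrix}\partial_\nu\psi+\phi(0)\,M(\cdot)\\ \partial_\nu\phi+\phi(0)\,N(\cdot)\end{pmatrix},
\end{align*}
with $D(\mathcal{A})=\{(\psi,\phi)\in H^1([0,1],\mathbb{R}^2):\ \psi(0)=-M(0)\phi(0),\ \psi(1)=-M(1)\phi(0),\ \phi(1)=a\phi(0)\}$, and $\mathcal{B}(\psi,\phi)=\phi(0)$. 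The precise compatible drift in the first line will be dictated by the integration-by-parts identity below; the displayed choice is the natural starting guess.

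Next, I would check the four items of Lemma~\ref{Chap_4_lemma:operator_invert} in order. Items (1) and (2) follow directly from the boundary relations above, since $\phi(0)=f(0)$ so $f(0)=0$ under $\mathcal{B}$ collapses the boundary data. Item (3), $\mathcal{T}_1 \mathcal{A} h=0$ for $h\in\ker(\mathcal{T}_1)$, is the main technical obstacle: differentiating the identity $\mathcal{T}_1 h=0$ in $\nu$, using the $C^2$-regularity of $M,N$ granted by Assumption~\ref{ass_regularity}, and performing integration by parts on the internal integrals (exactly as in~\eqref{Chap_4_eq_f_calcul}) should let me match $\mathcal{T}_1(\mathcal{A} h)$ term by term with zero. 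Item (4), $\ker(s\,\mathrm{Id}-\mathcal{A})\cap\ker(\mathcal{B})=\{0\}$, proceeds as in Lemma~\ref{Chap_4_lemma_unique}: solving the ODEs $sf=f'+f(0)N$, $sg=g'+f(0)M$ with $g(0)=0$ yields formulas analogous to~\eqref{eq_f_f0}, and the boundary conditions $f(1)=af(0)$ and $g(1)=-M(1)f(0)$ then reduce to $F_0(s)f(0)=0$ and $F_1(s)f(0)=0$. Assumption~\ref{Spectral controllability} forces $f(0)=0$, hence $h\equiv 0$.

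Finally, Lemma~\ref{Chap_4_lemma:operator_invert} yields invertibility of $\mathcal{T}_1$ and that $\mathcal{T}_1^{-1}$ is a Fredholm operator whose kernels inherit the regularity of $M,N$. Since the right-hand side built from $N$ is continuously differentiable under Assumption~\ref{ass_regularity}, the unique solution $(f,g)$ is continuously differentiable on each subinterval, which gives the claim (and uniqueness on the two intervals $[0,1)$ and $[1,2]$ of definition of $I_1$ and $I_3$). The hardest and most delicate step will be item (3): the choice of the compatible drift term in $\mathcal{A}$ must be tuned so that the integration-by-parts calculation closes identically, and it is precisely here that the additional regularity on $M,N$ and the condition $aM(0)-M(1)\ne 0$ enter in a non-trivial way.
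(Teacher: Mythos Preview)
Your overall strategy (apply Lemma~\ref{Chap_4_lemma:operator_invert} to $\mathcal{T}_1$) matches the paper, but your choice of $\mathcal{B}$ is wrong and makes item~(2) fail. From the first row of $\mathcal{T}_1 h=0$ evaluated at $\nu=0$ you only get $g(0)+M(0)f(0)=0$; combining both rows at $\nu=0$ and $\nu=1$ gives $f(1)=af(0)$ and $g(1)=-M(1)f(0)$, but \emph{nothing} forces $f(0)=0$. Hence $\ker(\mathcal{T}_1)\not\subset\ker(\mathcal{B})$ for $\mathcal{B}(\psi,\phi)=\phi(0)$. This is also why your item~(4) collapses to a triviality that never uses Assumption~\ref{Spectral controllability}: once $\phi(0)=0$ the ODEs become homogeneous and the domain conditions kill everything without any spectral input, which should have been a warning sign.

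The paper instead takes $B_{\mathcal{T}_1}(\psi,\phi)=M(0)\phi(0)+\psi(0)$ and keeps only the two boundary conditions $\phi(1)=a\phi(0)$, $\psi(1)=-M(1)\phi(0)$ in $D(A_{\mathcal{T}_1})$. With this choice, item~(2) is exactly the relation $g(0)+M(0)f(0)=0$ you derived, and item~(4) becomes the substantive step: solving $s\phi=\phi'+\phi(0)N$, $s\psi=\psi'+\phi(0)M'$ and imposing the boundary data yields $F_0(s)\phi(0)=0$ and, after an integration by parts, $(M(0)-M(1)e^{-s}+\int_0^1 M'(\nu)e^{-\nu s}d\nu)\phi(0)=sF_1(s)\phi(0)=0$. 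Assumption~\ref{Spectral controllability} handles $s\ne 0$ and the hypothesis $F_0(0)\ne 0$ from Assumption~\ref{ass_regularity} handles $s=0$. Note also that the drift in the first component of $A_{\mathcal{T}_1}$ must be $\phi(0)M'(\cdot)$, not $\phi(0)M(\cdot)$, since the integral kernels in $\mathcal{T}_1$ involve $M'$; with $M$ the integration-by-parts verification of item~(3) does not close.
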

\begin{proof}
The proof follows the same steps as those of the proof of Lemma~\ref{Chap_4_lemma_unique}. Therefore, we only give a sketch of the proof.
Let us introduce  the operator $A_{\mathcal{T}_1}$ defined on $D(A_{\mathcal{T}_1}) \subset L^2([0,\tau_0],\mathbb{R}^{2})$ by
    \begin{align}
&A_{\mathcal{T}_1}:~D(A_{\mathcal{T}_1}) \rightarrow L^2([0,\tau_0],\mathbb{R}^{2})\nonumber \\
&\begin{pmatrix}\psi \\  \phi \end{pmatrix}
\longmapsto \begin{pmatrix}\partial_x \psi+\phi(0)M'(\cdot) \\ \partial_x \phi +\phi(0)N(\cdot)  
\end{pmatrix}, \label{Chap_4_op_A_bis}
\end{align}
    where  $D(A_{\mathcal{T}_1})=\{(\psi,\phi) \in (H^1([0,1],\mathbb{R}^2)),~\phi(1)=a\phi(0),~\psi(1)=-M(1)\phi(0)\}$. We define the operator $B_{\mathcal{T}_1}:~D(A_{\mathcal{T}_1}) \rightarrow (L^2([0,\tau_0],\mathbb{R}^{2}))$, by 
    $
B_{{\mathcal{T}_1}}(\begin{pmatrix}\psi& \phi \end{pmatrix}^\top)=M(0)\phi(0)+\psi(0).$ We can then show that the operators $\mathcal{T}_1$, $A_{\mathcal{T}_1}$ and $B_{\mathcal{T}_1}$ verify the requirements of Lemma~\ref{Chap_4_lemma:operator_invert} (using the fact that $aM(0)-M(1)\ne 0$). Note that Assumption~\ref{Spectral controllability} implies (using integration by parts) that for all $s\in \mathbb{C}\backslash\{0\}, \text{rank}[F_0(s),M(0)-M(1)\mathrm{e}^{-s}+\int_0^{1}M'(\nu)\mathrm{e}^{-\nu s}d\nu]=1$, and Assumption~\ref{ass_regularity} implies the property for $s=0$. 
\end{proof}
}
\begin{theorem}
\label{Chap_4_State_feedback_bis}
Assume that Assumption~\ref{Spectral controllability}  and Assumption~\ref{ass_regularity} are verified.  Consider the functions $I_1$, and $I_3$ defined in~\eqref{eq_I_1_bis}-\eqref{eq_I_3_bis} and let $f$ and $g$ be functions that lead to $I_1(\nu)=0$ for all $\nu \in [0,1]$, and $I_3(\nu)=0$ for $\nu \in  [1,2]$. Then, the closed-loop system consisting of the plant~\eqref{eq_neutral} and the control law \eqref{Chap_4_eq_U_explicit_ter} is exponentially stable. 
\end{theorem}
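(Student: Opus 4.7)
The plan is to replicate the three-step strategy used for Theorem~\ref{Chap_4_State_feedback}: (i) invoke Lemma~\ref{Chap_4_lemma_unique_bbis} to obtain the unique $(f,g)$ zeroing out $I_1$ and $I_3$; (ii) show that with this choice the closed-loop characteristic equation reduces to $1=a\mathrm{e}^{-s}$; and (iii) conclude exponential stability from $|a|<1$ via the Hale-type spectral characterization used in Section~\ref{Sec_Candidate}.

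For step (ii), I would first take the Laplace transform of the feedback~\eqref{Chap_4_eq_U_explicit_ter}. The role of the two pointwise terms $f(0)x(t)$ and $-f(1)x(t-1)$ becomes transparent here: an integration by parts yields
\begin{align*}
f(0)-f(1)\mathrm{e}^{-s}+\int_0^1 f'(\nu)\mathrm{e}^{-\nu s}d\nu = s\int_0^1 f(\nu)\mathrm{e}^{-\nu s}d\nu,
\end{align*}
so that the overall state-dependent action of the controller is, in Laplace, the transfer function $sF(s)$ applied to $x(s)$, where $F(s)=\int_0^1 f(\nu)\mathrm{e}^{-\nu s}d\nu$. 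Multiplying this by the plant factor $\tilde M(s)=\int_0^1 M(\nu)\mathrm{e}^{-\nu s}d\nu$ and integrating by parts a second time on $s\tilde M(s)$ produces the boundary contributions $M(0)$ and $M(1)\mathrm{e}^{-s}$ together with an $M'$-convolution, which are precisely the structural elements seen in~\eqref{eq_I_1_bis}--\eqref{eq_I_3_bis}.

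Next, I would mimic the Fubini-and-change-of-variables manipulation performed at the end of Section~\ref{Sec_Candidate}, splitting each double integral along the diagonal $\nu+\eta=1$ to separate contributions supported on $\mathrm{e}^{-\nu s}$ for $\nu\in[0,1]$ from those for $\nu\in[1,2]$. Note that the intermediate range $[\tau_0,\tau_1]$ collapses because $\tau_0=\tau_1=1$, which is why only $I_1$ and $I_3$ (and no $I_2$) appear. The outcome should be
\begin{align*}
1-a\mathrm{e}^{-s}=\int_0^1 \mathrm{e}^{-\nu s}I_1(\nu)d\nu+\int_1^2 \mathrm{e}^{-\nu s}I_3(\nu)d\nu,
\end{align*}
with $I_1$, $I_3$ exactly as in~\eqref{eq_I_1_bis}--\eqref{eq_I_3_bis}. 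By Lemma~\ref{Chap_4_lemma_unique_bbis} the right-hand side vanishes, so every closed-loop characteristic root satisfies $\operatorname{Re}(s)=\log|a|<0$, and the spectral characterization of exponential stability then yields the desired decay of $x$ and $U$ in their respective norms.

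The main obstacle is not conceptual but algebraic bookkeeping: one must order the two integration-by-parts steps so that the residual pointwise contributions $M(0)f(\nu)$ and $-M(1)f(\nu-1)$ are absorbed inside $I_1$ and $I_3$ rather than surviving as leftover boundary terms. The structure of~\eqref{Chap_4_eq_U_explicit_ter} with $f'$ and the two pointwise corrections is deliberately engineered so that this absorption happens cleanly and so that $f\in C^1$ suffices, which is also the reason why Assumption~\ref{ass_regularity} imposes $C^2$ regularity on $M$ and $N$ rather than only piecewise continuous differentiability as in the $b\neq 0$ case.
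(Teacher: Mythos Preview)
Your proposal is correct and follows essentially the same route as the paper: the paper does not give a separate proof of Theorem~\ref{Chap_4_State_feedback_bis} but relies on the text preceding Lemma~\ref{Chap_4_lemma_unique_bbis} (``performing analogous computations\ldots we obtain the characteristic equation~\eqref{eq_charac_final} with $I_2=0$'') together with Lemma~\ref{Chap_4_lemma_unique_bbis} and the spectral characterization, exactly as you outline. Your explicit integration-by-parts explanation of how the pointwise terms $f(0)x(t)-f(1)x(t-1)$ convert the control action into $s\tilde f(s)$ in Laplace, and how a second integration by parts on $s\tilde M(s)$ generates the $M(0)f(\nu)$ and $-M(1)f(\nu-1)$ contributions in $I_1$ and $I_3$, is precisely the ``analogous computation'' the paper alludes to but does not spell out.
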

\modif{The proposed control strategy 
can be adjusted to deal with rationally dependent delays $\tau_0$ and $\tau_1$. For instance, when $\tau_1=(n_0+1)\tau_1$, the condition $M(1)\ne aM(0)$ in Assumption~\ref{ass_regularity} rewrites $M(1)\ne a^{n_0+1}M(0)$. Although we believe it is possible to adjust the approach to deal with non-rationally dependent delays, we have not been able to solve this case.}
\section{Simulation results} \label{Sec_simu}
In this section, we give some simulation results to illustrate our approach. We consider the system \eqref{eq_neutral}, with the coefficients  $\tau_0 = 1, \tau_1 = 1, a = 0.3, b=0$, and the integral coupling terms $N(\nu)=0.6+\sin(\pi*\nu)/5$ and $M(\nu)=\cos(\nu)$. \modif{The initial condition of the control input (on $[-\tau_1,0]$) is chosen as 0.} 
Assumption~\ref{Spectral controllability} is verified using a zero-location algorithm adjusted from the one presented in~\cite{saba2019stability}, while Assumption~\ref{ass_regularity} can be directly verified.
The control strategy is implemented using Matlab. The initial condition is a constant function $x^0 = 1$. The functions $f$ and $g$ are obtained by solving the Fredholm integral equations using a \modif{successive approximations approach (method of iterations). Although the convergence of this numerical approach is only guaranteed when the integral part of the Fredholm operator has a spectral radius smaller than one~\cite[Theorem 10.13]{kress1989linear}, we verified that for the considered example, the functions we numerically obtained were solutions of the system $I_1=I_3=0$. Among other possible numerical approaches to solve Fredholm integral equations, we can cite projection methods, collocation methods, Galerkin methods, or quadrature methods~\cite{kress1989linear}. These methods can possibly be numerically more efficient than the successive approximation approach. Moreover, we may obtain guarantees of convergence. In future work, we should investigate the properties of the kernels $f$ and $g$ better and focus on their numerical computation.} We have pictured in Figure~\ref{fig:norm_evol} the evolution of the state in open-loop and closed-loop. \modif{We also pictured the time evolution of the control input $U(t)$}. It can be seen that the proposed controller stabilizes the unstable open-loop system \modif{and that the control dynamics is also exponentially stable}. \modif{Finally, we have plotted the kernels $f$ and $g$  in Figure~\ref{fig:kernels}}.
\begin{figure}[ht]
\begin{center}
	\includegraphics[width=0.75\columnwidth]{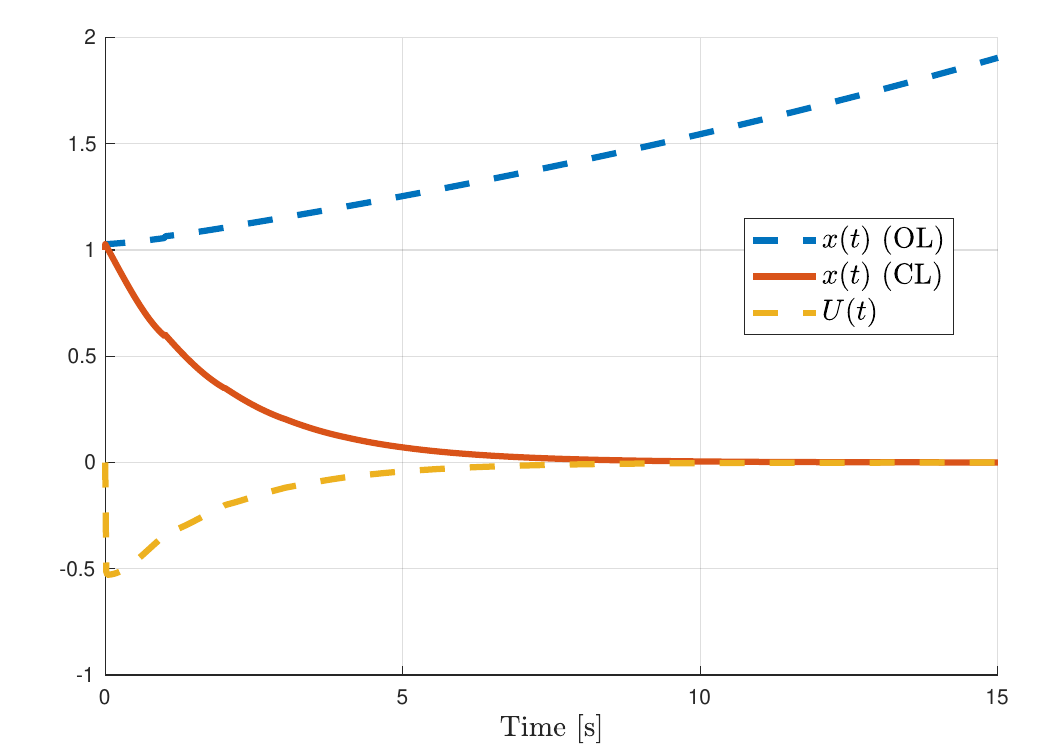}
	\caption{Evolution of the state of system \eqref{eq_neutral} in open-loop and closed-loop (with the corresponding control effort). }
	\label{fig:norm_evol}
	\end{center}
\end{figure}
\vspace{-0.9cm}
\begin{figure}[ht]
\begin{center}
	\includegraphics[width=0.75\columnwidth]{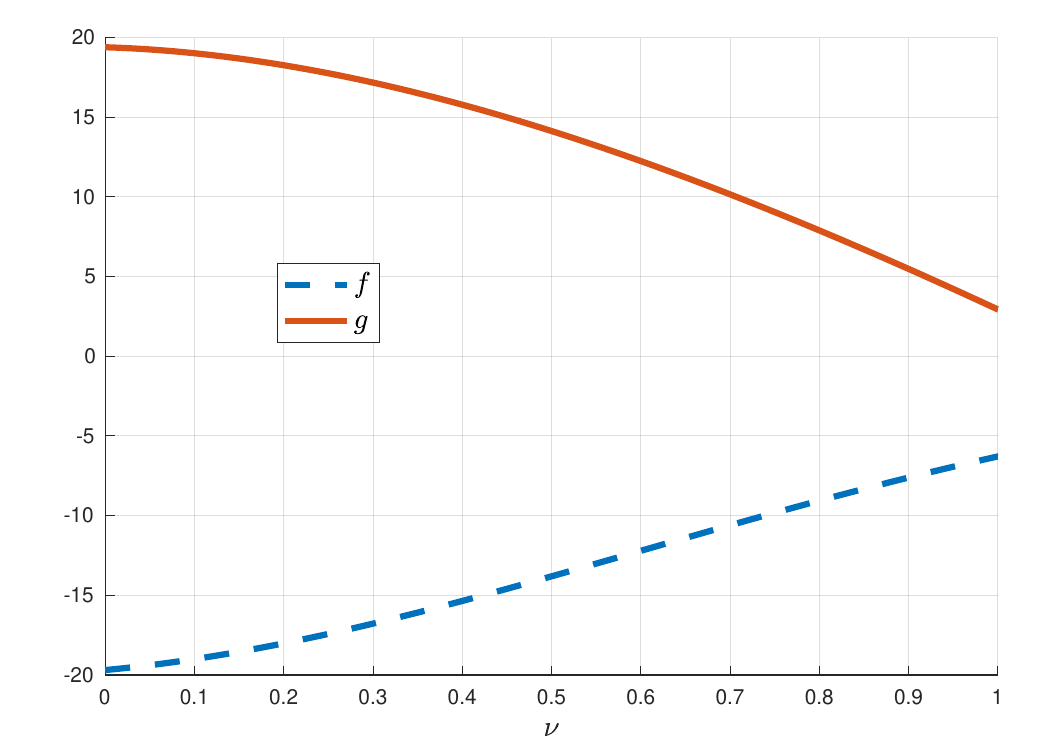}
	\caption{Representation of the kernels $f$ and $g$. }
	\label{fig:kernels}
	\end{center}
\end{figure}

\section{Concluding remarks}
We have proposed a new approach to stabilize a general class of integral delay equations. We introduced a candidate controller expressed as a distributed delayed feedback of the state and itself.
We showed that such a control law stabilizes the system if a set of Fredholm equations admitted a solution. Interestingly, spectral controllability conditions implied the existence of solutions for these
integral equations. The proposed approach overcomes the limitations of~\cite{redaud2022stabilizing} by proposing a simpler and more generalizable methodology. In future works, we will consider non-scalar systems with multiple delays. We will also endeavor to remove the conservative Assumption~\ref{ass_regularity}.

\bibliographystyle{plain}  
\bibliography{Biblio}

\end{document}